\def\Ddots{\mathinner{\mkern1mu\raise\p@
\vbox{\kern7\p@\hbox{.}}\mkern2mu
\raise4\p@\hbox{.}\mkern2mu\raise7\p@\hbox{.}\mkern1mu}}
\titleformat*{\subsection}{\Large\bfseries}
\titleformat*{\subsubsection}{\large\bfseries}
\titleformat*{\paragraph}{\large\bfseries}
\titleformat*{\subparagraph}{\large\bfseries}
\theoremstyle{Theorem}
\newtheorem{thm}{Theorem}[section]
\newtheorem{lem}[thm]{Lemma}
\newtheorem{qn}[thm]{Question}
\newtheorem{cor}[thm]{Corollary}
\theoremstyle{definition}
\newtheorem{defn}[thm]{Definition}
\newcommand{\N}{\mathbb{Z}^+}
\newcommand{\Z}{\mathbb{Z}}
\newcommand{\Q}{\mathbb{Q}}
\newcommand{\Po}{\mathbb{P}}
\newcommand{\p}{\mathcal{P}}
\newcommand{\bN}{\beta\mathbb{Z}^+}
\date{\vspace{-5ex}}
\begin{document}

\title{{\bf  Homogeneous Patterns in Ramsey Theory}}

\author{
Sukumar Das Adhikari\footnote{Ramakrishna Mission Vivekananda Educational and Research Institute, Belur Math, Howrah, West Bengal-711202, India. \textit{adhikarisukumar@gmail.com}} \and
Sayan Goswami\footnotemark[1] \footnote{\textit{sayan92m@gmail.com}}
}


\makeatother

\maketitle

\begin{abstract}
In this article, we investigate homogeneous versions of certain nonlinear Ramsey-theoretic results, with three significant 
applications.

As the first application, we prove that for every finite coloring of $\mathbb{Z}^+$, there exist an infinite set $A$ and an 
arbitrarily large finite set $B$ such that $A \cup (A+B) \cup A \cdot B$ is monochromatic. This result resolves the finitary 
version of a question posed by Kra, Moreira, Richter, and Robertson regarding the partition regularity of $(A+B) \cup A \cdot B$ 
for infinite sets $A, B$ (see \cite[Question 8.4]{kra1}), which is closely related to a question of Erd\H{o}s (see \cite[Page 58]{erdos}).

As the second application, we make progress on a nonlinear extension of the partition regularity of Pythagorean triples. 
Specifically, we demonstrate that the equation $x^2 + y^2 = z^2 + P(u_1, \dots, u_n)$ is $2$-regular for certain appropriately 
chosen polynomials $P$ of any desired degree.

Finally, as the third application, we establish a nonlinear variant of Rado's conjecture concerning the degree of regularity. We 
prove that for every $m, n \in \mathbb{Z}^+$, there exists an $m$-degree homogeneous equation that is $n$-regular but not 
$(n+1)$-regular. The case $m = 1$ corresponds to Rado's conjecture, originally proven by Alexeev and Tsimerman \cite{AT}, 
and later independently by Golowich \cite{Gol}.
\end{abstract}
\noindent \textbf{Mathematics subject classification 2020:} 05D10, 05C55,  22A15, 54D35.\\
\noindent \textbf{Keywords:} Homogeneous patterns, Moreira theorem, Polynomial van der Waerden theorem, Monochromatic Pythagorean 
triples, Rado’s conjecture, Degree of regularity.

\section{Introduction}

Arithmetic Ramsey theory studies the existence of monochromatic patterns in any finite coloring of the integers or the set 
$\mathbb{Z}^+$ of positive integers. A ``coloring'' of a set refers to its division into disjoint subsets, and a subset is 
said to be ``monochromatic'' if it is entirely contained within one of these subsets.  

A collection $\mathcal{F}$ of subsets of $\mathbb{Z}^+$ is called \textbf{partition regular} if, for every finite coloring 
of $\mathbb{Z}^+$, there exists a monochromatic element $F \in \mathcal{F}$. Similarly, an equation $F(x_1, \dots, x_n) = 0$ 
over $\mathbb{Z}$ is \textbf{partition regular} if, for every finite partition of $\mathbb{Z}^+$, there exists a monochromatic 
solution of the equation $F$.  

The classification of partition regular equations remains a long-standing and highly challenging open problem in Ramsey theory.

One of the earliest results in Ramsey theory was established by Schur \cite{schur} in 1916, stating that the pattern 
$\{x, y, x+y : x \neq y\}$ is partition regular. By considering the mapping $n \mapsto 2^n$, one can immediately 
deduce that the multiplicative analogue of Schur’s theorem also holds: the pattern $\{x, y, x \cdot y : x \neq y\}$ is partition regular.  

A natural question arises: can both the additive and multiplicative versions of Schur’s theorem hold simultaneously for the same 
values of $x$ and $y$? This question has remained open for over a century, though it has appeared multiple times in the literature.

\begin{qn}\textup{(\cite[Question 11]{update}, \cite[Page 58]{erdos}, \cite[Question 3]{hls})}
    Is the pattern $\{x,y,x+y,xy\}$ partition regular?
\end{qn}

This question was investigated as early as 1979 by Hindman \cite{..} and Graham \cite{.} through brute-force computation, where they 
found affirmative answers for the case of a 2-coloring. More recently, in \cite{...}, Bowen provided the first mathematical proof of 
this result. However, for general finite colorings, the question remains open.  

Over finite fields, the problem was resolved by Green and Sanders (see \cite{san}). In \cite{Mor}, Moreira proved that for any finite 
coloring of $\mathbb{Z}^+$, there exist $x, y \in \mathbb{Z}^+$ such that $\{x, x+y, x \cdot y\}$ is monochromatic. Later, Alweiss 
provided a shorter proof of this result in \cite{al1}.

In \cite[page 58]{erdos}, Erdős posed the question of whether, for every finite coloring of $\N$, there exists an infinite 
sequence $a_1 < a_2 < \cdots < a_n < \cdots$ such that the set $\{a_i \cdot a_j, a_i + a_j : i < j, \text{ and } i, j \in \N\}$ 
is monochromatic. In 1984, Hindman \cite{conj} constructed a finite partition of $\mathbb{N}$ that does not contain this pattern.  

Recently, inspired by the result of Moreira (\cite{Mor}), Kra, Moreira, Richter, and Robertson posed a slight variation of Erdős' 
question in \cite[Question 8.4]{kra1}. They asked whether, for any finite coloring of $\mathbb{N}$, there exist two infinite sets 
$A$ and $B$ (not necessarily the same) such that $(A+B) \cup (A \cdot B)$ is monochromatic.  

This result was unknown even for $|A| = |B| = 2$ until Bowen's work in \cite{bowen}. In that article, Bowen proved that 
$A \cup (A+B) \cup (A \cdot B)$ is monochromatic  for some $A$, $B$,  where $A$ is infinite and $B$ is an arbitrarily large 
finite set. In this article, we present a very short proof of this result. This result serves as the first objective of this article.

\begin{thm}\label{task}
For any finite coloring of $\N,$ there exists an infinite set $A$ and finite set $B$ of arbitrary length such that 
$A\cup (A+B)\cup A\cdot B$ is monochromatic.
\end{thm}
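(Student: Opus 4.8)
The plan is to reformulate the statement as a recurrence problem for a single color class and then to realize the three families $A$, $A+B$, $A\cdot B$ simultaneously. Fix a finite coloring $c:\N\to\{1,\dots,r\}$ and note that a monochromatic $A\cup (A+B)\cup A\cdot B$ of color $i$ is exactly a color class $C=c^{-1}(i)$, a finite set $B=\{b_1,\dots,b_k\}$, and an infinite set
\[
A\subseteq C\cap\bigcap_{j=1}^{k}(C-b_j)\cap\bigcap_{j=1}^{k}b_j^{-1}C,
\]
where $C-b_j=\{x\in\N:x+b_j\in C\}$ and $b_j^{-1}C=\{a\in\N:ab_j\in C\}$. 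So the task is to choose $C$ and $B$ making this intersection infinite, and the three requirements ($a\in C$, $a+b_j\in C$, $ab_j\in C$) split into an additive-recurrence part and a multiplicative-recurrence part on $B$.

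Second, I would pick an idempotent ultrafilter $p\in\bN$ and let $C\in p$ be the unique color class in $p$. The additive conditions are the easy part: with $p$ additively idempotent, the finite-sums machinery (Galvin--Glazer) yields a sequence $\langle y_n\rangle$ with $\FS(\langle y_n\rangle)\subseteq C$; taking the $b_j$ to be finite sums over an initial block of indices and the elements of $A$ to be finite sums over a disjoint tail makes each $a+b_j$ again a finite sum in $C$. Thus $a\in C$ and $a+b_j\in C$ hold automatically and for free for every such $a$, with $A$ forced to be infinite, purely from the disjoint-support property of $\FS$-sets.

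The main obstacle is the multiplicative condition $ab_j\in C$, i.e.\ $A\subseteq\bigcap_{j}b_j^{-1}C$, which must hold for the same infinite $A$ and the same $B$ that already carry the additive structure. The difficulty is structural: the additive part wants an additively idempotent ultrafilter, while $b_j^{-1}C\in p$ is a multiplicative-recurrence statement, and no single ultrafilter is both additively and multiplicatively idempotent. I expect this clash to be the crux, and I would resolve it by drawing the $\FS$-generators from a set that already encodes multiplicative recurrence, passing to an ultrafilter idempotent for one operation and recurrent for the other in the precise sense underlying Moreira's theorem, and intersecting the \emph{finitely many} dilates $b_j^{-1}C$. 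Since $B$ is finite, this intersection stays a member of the relevant ultrafilter and hence remains large; this is exactly where the asymmetry ``$A$ infinite, $B$ finite'' is used, and by Hindman's counterexample one cannot hope to make $B$ infinite as well.

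Finally, I would assemble the pieces: fix $B$ from the initial block and verify $a\in C$ from $\FS$-containment, $a+b_j\in C$ by disjoint supports, and $ab_j\in C$ by membership in the dilates, producing the monochromatic $A\cup(A+B)\cup A\cdot B$ with $A$ infinite and $|B|=k$ arbitrary. As an alternative to the ultrafilter bookkeeping for the colors of the products, one can pin down the colors of the finitely many $ab_j$ by applying Moreira's theorem or the polynomial van der Waerden theorem to a suitable auxiliary coloring, and then use a compactness (K\"onig's lemma) argument to upgrade arbitrarily large finite $A$ to an infinite $A$.
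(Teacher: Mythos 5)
Your reduction of the theorem to finding $C$, $B$, and an infinite $A\subseteq C\cap\bigcap_{j}(C-b_j)\cap\bigcap_j b_j^{-1}C$ is correct, and the additive half of your plan (Galvin--Glazer finite sums with the $b_j$ supported on an initial block and $A$ on a disjoint tail) works. But the multiplicative half, which you yourself identify as the crux, is not proved; it is restated as a hope. The claim that ``since $B$ is finite, the intersection of the dilates $b_j^{-1}C$ stays a member of the relevant ultrafilter'' needs $b_j^{-1}C\in p$ for your additively idempotent $p$ with $C\in p$, and there is no reason for this: writing $\tilde{b}(p)$ for the image of $p$ under $x\mapsto bx$, one has $b^{-1}C\in p\iff C\in\tilde{b}(p)$, so you would essentially need $\tilde{b}(p)=p$. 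Such dilation-fixed ultrafilters do not exist: if $\tilde{2}(p)=p$ and $A_i=\{n:\operatorname{ord}_2(n)\equiv i \pmod 3\}$, then $A_i\in p$ forces $A_{i+1}\supseteq 2A_i\in\tilde{2}(p)=p$, contradicting disjointness. No single ultrafilter combines additive idempotency with the multiplicative recurrence you need; this is exactly why Moreira's argument, and the proof in the paper (Theorem \ref{essential}), abandon a one-ultrafilter scheme in favor of an iterative construction on piecewise syndetic sets, using that dilates of piecewise syndetic sets are piecewise syndetic, letting the color class change at every stage, and recovering a single color only at the end by pigeonhole on the indices $t_i$. Your sketch never engages with this mechanism, so the central step is missing.

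Your fallback is also flawed: compactness (K\"onig's lemma) goes from infinite colorings to finite intervals, not from arbitrarily large finite configurations to infinite ones. Van der Waerden's theorem gives arbitrarily long monochromatic progressions, yet a $2$-coloring of $\N$ need not contain an infinite one; likewise, producing $|A|=k$ for every $k$ does not produce an infinite $A$. In the paper the infinitude of $A$ comes from a genuinely infinitary statement: Theorem \ref{essential} gives, for one fixed $y$, an \emph{infinite} set of $x$ with $\{x,xy,x+f(y):f\in F\}$ monochromatic, for a finite family $F$ of rational polynomials. The paper then handles sums and products in one stroke rather than by your additive/multiplicative split: with the product coloring $\omega'(\alpha)_i=\omega(i\alpha)$, $i\in[1,R]$, and a monochromatic geometric progression $\{a,ad,\dots,ad^n\}\subseteq[1,R]$ for the induced coloring, it sets $A=aC$ and $B=\{dy,\dots,d^ny\}$, so that $ax+d^iy=a\bigl(x+\tfrac{d^i}{a}y\bigr)$ is controlled by the rational translates in $F$ and $(ax)(d^iy)=(ad^i)(xy)$ is controlled by the $xy$ part together with the choice of the progression. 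Reproducing that interplay, not the FS bookkeeping, is the actual content of the theorem.
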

\smallskip

The second goal of this article is to explore a nonlinear extension of the partition regularity of Pythagorean triples. While linear 
equations have relatively simple answers in terms of partition regularity, understanding this property for polynomial equations remains 
highly challenging. In particular, the following seemingly simple question posed by Erd\H{o}s and Graham remains open:

\begin{qn}\textup{\cite{pyth1, pyth2}}
    Is the equation \( x^2 + y^2 = z^2 \) partition regular?
\end{qn}

In 2016, with the help of a computer search \cite{comp}, M. Heule, O. Kullmann, and V. Marek confirmed this result for the case of 
2-colorings. The following theorem demonstrates that certain polynomials can be added to this equation while preserving 2-regularity.

Our second main result in this article is the following theorem:

\begin{thm}\label{xyz}
    For any \( n \in \N \) and \( c \in \mathbb{Q} \), the following equations are \(2\)-regular:
    \begin{enumerate}
        \item \label{xyz11} \( X^2 + Y^2 = Z^2 + c \cdot U^n V \),
        \item \label{xyz12} \( X^2 + Y^2 = Z^2 + c \cdot (U \pm V) \).
    \end{enumerate}
\end{thm}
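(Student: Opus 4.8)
The plan is to treat the Heule--Kullmann--Marek theorem (the $2$-regularity of $x^2+y^2=z^2$) as the engine, and to \emph{graft} the extra term onto a monochromatic Pythagorean triple. The structural fact I would record first is that the first equation of Theorem~\ref{xyz} is quasi-homogeneous: for every $t\in\N$ the substitution $(X,Y,Z,U,V)\mapsto(tX,tY,tZ,U,t^{2}V)$ carries solutions to solutions, for any $n$, since then $X^2+Y^2-Z^2$ and $cU^{n}V$ both acquire a factor $t^{2}$. Thus a single solution may be transported to every scale while $U$ stays rigid, and the rational coefficient $c=p/q$ may be cleared by restricting to the dilated copy $q\N$, on which $cU^{n}V\in\Z$; replacing the coloring $\chi$ by $m\mapsto\chi(qm)$ costs nothing. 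The second equation with the \emph{minus} sign is then essentially free (if equal values of $U,V$ are permitted): taking $U=V$ collapses it to $X^2+Y^2=Z^2$, which is solved monochromatically by a direct appeal to Heule--Kullmann--Marek.

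For the remaining cases I would proceed as follows. Fix a $2$-coloring and isolate a color class carrying a Pythagorean triple $X^2+Y^2=H^2$ of color $i$. Since $H^2-Z^2=(H-Z)(H+Z)$, a solution of the first equation with these $X,Y$ amounts to a choice of $Z,U,V$ of color $i$ with $(H-Z)(H+Z)=cU^{n}V$; for instance $H-Z=cU^{n}$ and $V=H+Z$ (that is, $Z=H-cU^{n}$ and $V=2H-cU^{n}$, valid when $cU^{n}<H$) solve it formally. The task is therefore to realize the auxiliary variables inside the \emph{same} class, and here I would invoke the polynomial van der Waerden theorem to produce $U$, together with the induced $Z,V$, as a monochromatic polynomial configuration, using the degree-$n$ polynomial $d\mapsto cd^{\,n}$ to generate the $U^{n}$ factor. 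The second equation with the plus sign is the same scheme with the linear polynomial $d\mapsto cd$ in place of $d\mapsto cd^{\,n}$, so ordinary van der Waerden suffices there.

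The step I expect to be the real obstacle is precisely this coordination: grafting $U,V,Z$ onto a \emph{single fixed} triple is too rigid, since Heule--Kullmann--Marek delivers only one hypotenuse $H$ and one cannot also demand rich monochromatic structure in a prescribed window around it. What makes the difficulty essential is that neither half of the configuration can be dispensed with. The quasi-homogeneous scaling alone cannot force monochromaticity --- already $\{t,2t\}$ is not partition regular, e.g.\ under $m\mapsto\lfloor\log_{2}m\rfloor\bmod 2$, where $t$ and $2t$ always receive opposite colors --- while the degenerate reductions that drop the Pythagorean relation, such as $Y=Z$ (which leaves $X^2=cU^{n}V$, whose exponent form $2x=nu+v$ violates Rado's column condition once $n\ge 3$), fail outright for large $n$. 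Likewise, any naive application of polynomial van der Waerden with a common anchor $a$ is blocked by a leftover $a^{2}$ term, since the signature $(+,+,-)$ of $X^2+Y^2-Z^2$ does not cancel. Hence the Pythagorean triple and the polynomial term must be found \emph{together} in one color class, and I expect the workable route to be an idempotent-ultrafilter argument in $\beta\N$: choose an ultrafilter witnessing simultaneously the largeness of Pythagorean triples and the polynomial van der Waerden structure, and read off all five variables from a single idempotent, so that the $a^{2}$-type obstruction is absorbed by the exact relation $X^2+Y^2=H^2$.
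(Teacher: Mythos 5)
Your algebraic grafting idea is essentially the right one --- translate part of a Pythagorean triple so that the relation $x^2+y^2=z^2$ absorbs the leftover square terms --- and you have correctly isolated the crux: the triple and the polynomial configuration must be found in a single color class \emph{together}. But the mechanism you propose for this coordination is not available. There is no ultrafilter, idempotent or otherwise, known to ``witness the largeness of Pythagorean triples'': the Heule--Kullmann--Marek theorem is a finite SAT computation that yields $2$-regularity only, and no largeness notion compatible with $\beta\N$ arguments (central sets, piecewise syndetic sets, IP-sets, members of some fixed idempotent in $K(\beta\N,+)$) is known to contain solutions of $x^2+y^2=z^2$. If an idempotent ultrafilter all of whose members contained Pythagorean triples existed, the equation $x^2+y^2=z^2$ would be partition regular for \emph{every} finite coloring, which is precisely the famous open question of Erd\H{o}s and Graham quoted in the paper. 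So your final step assumes something far stronger than what is known, and the argument does not close; this is not a repairable detail but the entire content of the theorem.

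What the paper does instead is run the composition in the opposite direction, so that ultrafilters are applied only on the van der Waerden side and Heule--Kullmann--Marek enters purely as a finite pigeonhole. By compactness there is an $R$ such that every $2$-coloring of $[1,R]$ contains a monochromatic Pythagorean triple. Given the coloring $\omega$ of $\N$, one forms the product coloring $\omega'(\alpha)=\left(\omega(i\alpha)\right)_{i=1}^{R}$ and applies the polynomial van der Waerden theorem (which \emph{does} admit a central-set proof, also for rational polynomials, after forcing suitable divisibility of $d$) to $\omega'$. The resulting progression $\p=\{d\}\cup\{a+P(d):P\in F_1\}$ has every dilate $i\cdot\p$, $i\le R$, $\omega$-monochromatic, so $m\mapsto\omega(m\cdot\p)$ colors $[1,R]$, where one finds a triple $b_1^2+b_2^2=b_3^2$ with $b_1\p$, $b_2\p$, $b_3\p$ of one color; since the Pythagorean family is dilation-invariant (``homogeneous''), $(ab_1,ab_2,ab_3)$ is again a triple, and taking $F\supseteq\{\tfrac{c}{2}z^n,\tfrac{c}{4}z^n,0\}$ the identity $\left(x+\tfrac{c}{2}u^n\right)^2+y^2=z^2+cu^n\left(x+\tfrac{c}{4}u^n\right)$ finishes part (1) with all five entries monochromatic. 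Two further points: your disposal of the minus-sign case via $U=V$ is exactly the kind of degenerate solution the paper is at pains to avoid (its proof gives $U=zd$, $V=yd$ distinct), and part (2) genuinely needs more than van der Waerden --- there $U,V$ are the \emph{products} $yd,zd$, so the paper must invoke the homogeneous version of the Moreira-type theorem producing $\{b,\,b+P(d),\,b\cdot d\}$, not merely translates. Your quasi-homogeneous scaling $(tX,tY,tZ,U,t^2V)$, while correct, plays no role; the invariance that matters is that of the Pythagorean family itself.
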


We note that when \( c = 1 \), one case of (2) follows from the polynomial van der Waerden theorem, while the other follows from \cite{green} by simply setting \( X = Y = Z \). However, our proof ensures that \( X, Y, Z \) are mutually distinct.  

Another important observation is that our proof of Theorem \ref{xyz} implies that if \( X^2 + Y^2 = Z^2 \) is \( r \)-regular for some \( r > 2 \), then both equations in Theorem \ref{xyz} remain \( r \)-regular. For recent developments in this direction, see \cite{fr1, fr2}.
\smallskip

The third objective of this article is to establish a nonlinear extension of a conjecture by Rado concerning the degree of regularity. In 1927, van der Waerden \cite{vdw} proved that for any \( l, r \in \mathbb{Z}^+ \), there exists a positive integer \( W(l,r) \) such that, for any \( r \)-coloring of \( \{1,2, \ldots, W(l,r)\} \), there exists a monochromatic arithmetic progression of length \( l \).  

In 1933, Rado \cite{rado} extended this result by providing a necessary and sufficient condition for the partition regularity of a system of linear homogeneous equations. However, despite this foundational work, very few results are known for nonlinear equations.  

For any \( r, m \in \mathbb{Z}^+ \), an equation \( f(x_1,\dots ,x_m) = 0 \) over \( \mathbb{Z} \) is said to be \( r \)-regular if, for every \( r \)-coloring of \( \mathbb{Z}^+ \), there exists a monochromatic set \( \{x_1, \dots ,x_m\} \) such that \( f(x_1, \dots ,x_m) = 0 \). The smallest such \( r \) for which the equation is \( r \)-regular but not \( (r+1) \)-regular is called its \emph{degree of regularity}.  

In \cite{rado}, Rado conjectured that for every \( n \in \mathbb{Z}^+ \), there exists a linear homogeneous equation over \( \mathbb{Z} \) with degree of regularity equal to \( n \). In 2009, Alexeev and Tsimerman \cite{AT} confirmed this conjecture by proving that the equation  
\[
\left(1-\sum_{i=1}^{n-1}\frac{2^i}{2^i-1}\right)x_1+\sum_{i=1}^{n-1}\frac{2^i}{2^i-1}x_{i+1} = 0
\]
is \( (n-1) \)-regular but not \( n \)-regular.  

Subsequently, Golowich \cite{Gol} confirmed a conjecture of Fox and Radoi\'{c}i\v{c} \cite{rr} by proving that, for every \( n \geq 2 \), the equation  
\[
x_1+2x_2+\dots +2^{n-2}x_{n-1}-2^{n-1}x_n = 0
\]
is \( (n-1) \)-regular but not \( n \)-regular, providing an alternative proof of Rado’s conjecture.  


The third objective of this article is to establish a nonlinear extension of this result. We prove the following result.

\begin{thm}\label{newone}
    Let $p$ be a prime, and  $m,n\in \N$ be such that $m$ is not a zero divisor in $\Z_n.$ Then the equation 
$$M_n\equiv \sum_{i=1}^{n-1}p^{mi}X_{i,1} X_{i,2}^{m-1}-X_n^{m-1}X_{n+1}=0$$ is $n-1$ regular but not $n$ regular.

In other words, given $m,n\in \N$, there exists an $m$ degree equation which is $n$ regular but not $n-1$ regular.
\end{thm}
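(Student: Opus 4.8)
The plan is to prove the two halves of the statement separately: first exhibit an explicit $n$-colouring with no monochromatic solution, so that $M_n=0$ is not $n$-regular, and then reduce the degree-$m$ equation to a linear equation whose $(n-1)$-regularity I establish. The homogeneity of $M_n$ (every monomial has degree $m$) is exactly what makes the second reduction possible, since it lets me collapse the auxiliary variables onto a single value of the correct colour.

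For the upper bound I would colour by the $p$-adic valuation: set $\chi(x)=v_p(x)\bmod n$, an $n$-colouring of $\mathbb{Z}^+$. If a solution were monochromatic, then $v_p$ of every variable would be congruent to a fixed $\gamma$ modulo $n$, and computing valuations termwise gives
\[
v_p\!\left(p^{mi}X_{i,1}X_{i,2}^{m-1}\right)\equiv m(i+\gamma),\qquad v_p\!\left(X_n^{m-1}X_{n+1}\right)\equiv m\gamma \pmod n .
\]
Because $m$ is not a zero divisor in $\mathbb{Z}_n$, multiplication by $m$ is injective on $\mathbb{Z}_n$; hence for distinct $i,i'\in\{1,\dots,n-1\}$ the residues $m(i+\gamma)$ are pairwise distinct, and each differs from $m\gamma$ (the difference being $mi\not\equiv0$). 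In particular the $n-1$ term-valuations are distinct as integers, so the minimum among them is attained uniquely and, by the ultrametric inequality, $v_p$ of the left-hand side equals that minimum, which is $\equiv m(i^\ast+\gamma)\not\equiv m\gamma$. This contradicts $v_p(\mathrm{LHS})=v_p(\mathrm{RHS})\equiv m\gamma$. Thus $\chi$ admits no monochromatic solution, so $M_n=0$ is not $n$-regular; I expect this part to go through cleanly, and it is where the non-zero-divisor hypothesis is used.

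For the lower bound I would first reduce to a linear equation. Given an $(n-1)$-colouring, it suffices to produce a monochromatic solution of
\[
\sum_{i=1}^{n-1} p^{mi}\,a_i = c ;
\]
indeed, from such a solution of colour $\gamma$ I recover a solution of $M_n=0$ by setting $X_{i,1}=a_i$, $X_{n+1}=c$, and $X_{i,2}=X_n=a_1$ (all of colour $\gamma$), whereupon every monomial acquires the common factor $a_1^{m-1}$ and the degree-$m$ identity collapses to the linear one. So the whole theorem follows once the displayed linear equation, with $q:=p^m\ge 2$, is shown to be $(n-1)$-regular; together with the $p$-adic colouring above (read modulo $n$), this pins its degree of regularity at exactly $n-1$.

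The main obstacle is precisely this positive direction. The linear equation is not translation invariant — its coefficients satisfy $\sum_{i=1}^{n-1}q^i-1\neq 0$ — so it is not partition regular and cannot be handled by van der Waerden's or Gallai's theorem alone; it lies squarely in the ``prescribed degree of regularity'' regime of Rado's conjecture. I would therefore establish its $(n-1)$-regularity by transferring the lower-bound argument of Alexeev--Tsimerman \cite{AT} and Golowich \cite{Gol} from base $2$ to the base $q=p^m$: given an $(n-1)$-colouring, one runs their colour-focusing construction, inductive in the number of colours and built on van der Waerden's theorem applied along geometric-type progressions in $q$, with the geometrically increasing coefficients $q,q^2,\dots,q^{n-1}$ providing the room needed to place the isolated term $c$. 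Verifying that this construction survives the passage to a general prime-power base is where the real work lies; the reduction and the explicit colouring above are by comparison routine.
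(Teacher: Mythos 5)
Your negative half is correct and coincides with the paper's own argument: the colouring $\chi(x)=\operatorname{ord}_p(x)\bmod n$, the term valuations $m(i+\gamma)$ and $m\gamma$, injectivity of multiplication by $m$ on $\Z_n$, and the ultrametric conclusion that a sum of terms with pairwise distinct $p$-adic valuations cannot vanish. Your reduction of the degree-$m$ equation to the linear equation $\sum_{i=1}^{n-1}p^{mi}a_i=c$ (giving every monomial the common factor $a_1^{m-1}$) is also sound, and in fact it mirrors what the paper does implicitly: in the paper's solution $X_{i,2}=X_n=d$ throughout, so every monomial carries the factor $d^{m-1}$ and $M_n=0$ collapses to exactly this linear identity with $a_i=d$ for $i\neq j$, $a_j=(b+d)/p^{mj}$ and $c=b+Qd$.

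The genuine gap is the positive half: you never prove that $\sum_{i=1}^{n-1}p^{mi}a_i=c$ is $(n-1)$-regular, and that statement carries the entire content of the $(n-1)$-regularity of $M_n$. Deferring it to a ``transfer'' of the Alexeev--Tsimerman/Golowich colour-focusing is not a proof, and it is far from routine: those arguments are tailored to their specific coefficient vectors ($\frac{2^i}{2^i-1}$, respectively $1,2,\dots,2^{n-2},-2^{n-1}$), and your vector $q,q^2,\dots,q^{n-1},-1$ with $q=p^m$ is of neither form. Note also that the cheapest attempt --- taking all $a_i$ equal to a single $a$ --- forces $c=Ka$ with $K=\sum_{i=1}^{n-1}q^i$, and the family $\bigl\lbrace \{a,Ka\}:a\in\N\bigr\rbrace$ is not even $2$-regular (colour by $\lfloor \operatorname{ord}_p(x)/m\rfloor \bmod 2$), so any correct proof must produce \emph{unequal} $a_i$ of the same colour. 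This is precisely what the paper's machinery supplies and what your sketch lacks: by pigeonhole on $\{M,M/p^{m},\dots,M/p^{m(n-1)}\}$, every $(n-1)$-colouring admits a monochromatic pair $\{b,b/p^{mj}\}$, i.e.\ a homogeneous $(n-1)$-regular family; feeding this family into the homogeneous polynomial van der Waerden theorem (Theorem \ref{rvdw}) with the linear polynomials $Qx$, $Q=\sum_{i=1}^{n-1}p^{mi}-p^{mj}+1$ (together with $x/p^{mj}$, which is tacitly used for the entry $a_j$), yields a monochromatic set containing $d$, $(b+d)/p^{mj}$ and $b+Qd$ --- exactly a monochromatic solution of your linear equation. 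Without this step, or a fully worked-out substitute for it, your proposal establishes only the non-$n$-regularity.
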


\section{Proof of Theorem \ref{task}}
To prove Theorem \ref{task}, we first establish the following result, which is known for integer polynomials as a consequence 
of \cite[Proof of Theorem 1.4]{Mor}. However, since our result involves rational polynomials, an independent proof is required. 
We adapt the argument from \cite{Mor} while incorporating the necessary modifications to extend the result to the rational setting.

Throughout our article $\Po$ will denote  the collection of all polynomials with rational coefficients having the constant term zero.
For any non-empty set $S$, $\p_f(S)$ will be the collection of all finite nonempty subsets of $S$.

\begin{thm}\label{essential}
    For any $r\in \N,$ $F\in \p_f(\Po)$ and for any $r$- coloring $\N=\bigcup_{i=1}^rC_i$ there exists $i\in [1,r]$ 
and $y\in \N$ such that  $$\left\lbrace x: \{ x,xy,x+f(y):f\in F\}\subset C_i\right\rbrace$$ is infinite.
\end{thm}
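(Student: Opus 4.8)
The plan is to follow the scheme of Moreira's proof of the $\{x,x+y,xy\}$ theorem \cite{Mor}, recast in the language of idempotent ultrafilters, since the only essentially new features here are the rational coefficients and the demand that the witnessing set of $x$ be infinite. I would work simultaneously in $(\bN,+)$ and $(\bN,\cdot)$ and read the three requirements $x\in C_i$, $xy\in C_i$ and $x+f(y)\in C_i$ as membership statements $C_i\in p$, $\{x:xy\in C_i\}\in p$ and $\{x:x+f(y)\in C_i\}\in p$ for a single nonprincipal ultrafilter $p$. Once all of these hold for a fixed $y$ and fixed $i$, the set in the statement equals $C_i\cap\{x:xy\in C_i\}\cap\bigcap_{f\in F}\{x:x+f(y)\in C_i\}\in p$, which is automatically infinite. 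Thus ``infinite'' is not a separate difficulty: it is built into the method.

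First I would remove the rational coefficients. Let $L$ be the least common multiple of all denominators appearing in the finitely many $f\in F$; then each monomial $\tfrac{a}{b}t^{k}$ is integer-valued on $L\N$, so $f(y)\in\Z$ for every $y\in L\N$ and every $f\in F$. Restricting the multiplier to $y\in L\N$ therefore makes all the shifts $f(y)$ legitimate integers, and replacing each $f(t)$ by the integer-valued polynomial $t\mapsto f(Lt)$ (still with zero constant term) returns us to the integer setting treated in \cite{Mor}. This restriction is free for the ultrafilters we use: the residue map $\N\to\Z/L\Z$ is a homomorphism for both operations and carries an idempotent to an idempotent, so the relevant idempotent concentrates on $L\N$, forcing $L\N\in p$. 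This is precisely the point at which an argument independent of \cite{Mor} is needed, and it is routine.

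The heart of the matter is the coordinated recurrence. Choosing a minimal idempotent $p=p\cdot p$ in $(\bN,\cdot)$ and the color $C_i$ with $C_i\in p$, multiplicative idempotency yields $C_i^{\star}=\{y\in C_i:\,\{x:xy\in C_i\}\in p\}\in p$, and for each $y\in C_i^{\star}$ the set $\{x:xy\in C_i\}$ lies in $p$; this manufactures the product term $xy$ for a $p$-large set of $x$ and for every ratio $y$ drawn from the central set $C_i^{\star}$. For the additive part I would invoke the polynomial van der Waerden theorem of Bergelson and Leibman, in its central/idempotent form, applied to the family $F$: it supplies, inside a central color class, configurations $x,\,x+f(y)$ $(f\in F)$ with a common parameter $y$, and its refinement keeps the corresponding set of admissible $x$ large rather than singleton.

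The main obstacle — and the crux inherited from \cite{Mor} — is that the \emph{same} $y$ must serve at once as the multiplicative ratio producing $xy$ and as the argument of the additive polynomials producing the $x+f(y)$, whereas no single ultrafilter is simultaneously additively and multiplicatively idempotent. To reconcile the two structures I would take $p$ to be a minimal idempotent of $(\bN,\cdot)$ that additionally lies in the closure of the smallest ideal of $(\bN,+)$, so that every $C_i\in p$ is at once multiplicatively and additively central; the existence of such ultrafilters is a standard fact about the combined additive--multiplicative structure of $\bN$. With such a $p$ the set of admissible ratios $y$ — those for which the additive returns $\{x:x+f(y)\in C_i\}$ all lie in $p$ — is large by the polynomial Central Sets Theorem of Bergelson and McCutcheon, hence meets $C_i^{\star}\in p$, and any $y$ in the intersection together with the color $C_i$ witnesses the theorem. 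I expect this coordination to absorb essentially all of the difficulty; by contrast the rational reduction and the passage from one $x$ to infinitely many $x$ are comparatively soft.
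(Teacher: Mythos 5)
Your reduction of the rational coefficients and your observation that infinitude comes for free once the witnessing set lies in a nonprincipal ultrafilter are both fine. The fatal problem is the coordination step, which is exactly the crux you identify and then dispose of with an unsupported claim. You need: for your multiplicative minimal idempotent $p$ with $C_i\in p$, some $y\in C_i^{\star}$ such that $C_i-f(y)\in p$ for every $f\in F$ --- membership in the \emph{same} multiplicative ultrafilter $p$. Neither the polynomial van der Waerden theorem nor the polynomial Central Sets Theorem of Bergelson--McCutcheon says anything of the sort: those results produce return times $y$ for which $C_i\cap\bigcap_{f}(C_i-f(y))$ is large with respect to an \emph{additive} idempotent $q$ (or is merely piecewise syndetic), and largeness with respect to $q$ gives no control over membership in $p$. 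Two further points in your setup are also wrong or insufficient: membership of $p$ in $\overline{K(\bN,+)}$ makes every member of $p$ additively \emph{piecewise syndetic}, not additively \emph{central}; and even if every $C_i\in p$ were additively central via some idempotent $q\in K(\bN,+)$, the set of good $y$ would be $q$-large while $C_i^{\star}$ is $p$-large, and a member of $q$ can be disjoint from a member of $p$. If your step were valid, Moreira's theorem itself would have a one-paragraph ultrafilter proof; the absence of any single ultrafilter simultaneously witnessing the additive and multiplicative demands is precisely the known obstruction that forces the actual argument to be iterative.

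The paper's proof avoids this trap by never asking one ultrafilter to do both jobs. It runs an induction on piecewise syndetic sets: given $B_{n-1}\subseteq C_{t_{n-1}}$ piecewise syndetic, polynomial van der Waerden (the paper's Theorem 2.2) produces $y_n$ with $D_n=B_{n-1}\cap\bigcap_{P\in F_n}(B_{n-1}-P(y_n))$ piecewise syndetic; dilation invariance of piecewise syndeticity then makes $y_nD_n$ piecewise syndetic, and one intersects with a color class to get $B_n=y_nD_n\cap C_{t_n}$. The additive shifts are pushed across the accumulated dilations by the rescaled polynomials $P_z(n)=zP(nz)$ (this is where the structure of $F_n$ matters), and the conclusion comes from pigeonholing two indices $j<i$ with $t_i=t_j$: every $\tilde{x}\in B_i$ yields a good $x=\tilde{x}/y$ with $y=y_{j+1}\cdots y_i$, and the infinitude of the set of such $x$ follows because $B_i$ is piecewise syndetic, hence infinite. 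Your proposal, by contrast, cannot be repaired by citing a stronger central-sets theorem; it would need the genuinely new (and unavailable) fact that polynomial translates $C_i-f(y)$ of a member of a multiplicative idempotent return to that same idempotent.
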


 To establish Theorem \ref{essential}, we require certain technical results concerning ultrafilters. Let $\beta \N$ denote 
the space of ultrafilters, and for any two $ p, q \in \beta \N $, define their sum via the relation  
\[
A \in p + q \iff \{x : -x + A \in q\} \in p,
\]
where $ -x + A = \{ y : x + y \in A \} $. It is well known that $(\beta \N, +)$ forms a compact right-topological semigroup in 
which the right action is continuous. The minimal two-sided ideal of this semigroup, denoted $ K(\beta \N, +) $, consists of minimal 
ultrafilters. A set belonging to any member of $ K(\beta \N, +) $ is termed a \textit{piecewise syndetic set}. For a comprehensive 
exposition on these concepts, we refer the reader to \cite{hs}.  

The following result, known as the polynomial van der Waerden theorem, plays a crucial role in our argument. For different proofs we 
refer the articles \cite{poly1,polynomial,poly3}.

\begin{thm}\textup{\cite[Theorem 3.6]{polynomial}}\label{ap1}  
For $F\in \p_f(\Po)$,
and any piecewise syndetic set $ A \subseteq \N $, there exists $ n \in \N $ such that  
\[
A \cap \bigcap_{P \in F} (-P(n) + A)
\]
is a piecewise syndetic set.  
\end{thm}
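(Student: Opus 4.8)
The plan is to deduce this piecewise syndetic form of the polynomial van der Waerden theorem from its version for \emph{central} sets, exploiting the algebra of $\bN$ introduced above. Two preliminary reductions come first. Since the members of $F$ have rational coefficients, the translate $-P(n)+A=\{y:P(n)+y\in A\}$ is empty unless $P(n)\in\Z$; so let $b\in\N$ be a common denominator for all coefficients occurring in $F$ and restrict attention to $n\in b\N$. Because each $P\in\Po$ has zero constant term, $P(bm)\in\Z$ for every $m\in\N$, and $Q_P(m):=P(bm)$ is a polynomial with integer coefficients and $Q_P(0)=0$. Hence it suffices to find $m\in\N$ such that $A\cap\bigcap_{P\in F}\big(-Q_P(m)+A\big)$ is piecewise syndetic, after which $n=bm$ settles the claim.

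Next I would pass from the piecewise syndetic set $A$ to a central translate. Since $A$ is piecewise syndetic, the characterization in \cite{hs} provides $p\in K(\bN,+)$ with $A\in p$. Choosing a minimal left ideal $L\ni p$ and a (necessarily minimal) idempotent $e\in L$, the right-identity property $pe=p$ enjoyed by such ideals gives $A\in p\cdot e$, i.e. $\{x:-x+A\in e\}\in p$. This set is nonempty, so there is $x\in\N$ with $C:=-x+A\in e$; that is, $C$ is central.

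With these reductions in hand, the core step is to apply the polynomial van der Waerden theorem for central sets to $C$ together with the integer polynomials $\{Q_P:P\in F\}$: it yields $m\in\N$ for which $C\cap\bigcap_{P\in F}\big(-Q_P(m)+C\big)$ is again central, in particular piecewise syndetic. A direct computation using $C=-x+A$ shows $C\cap\bigcap_P(-Q_P(m)+C)=-x+\big(A\cap\bigcap_P(-Q_P(m)+A)\big)$, so that $A\cap\bigcap_{P\in F}(-P(bm)+A)$ is a fixed translate of a piecewise syndetic set and therefore itself piecewise syndetic. Taking $n=bm$ completes the argument.

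The hard part is the central-set input driving the core step. Upgrading a statement of the shape ``some $n$ makes the intersection nonempty'' (the finitary polynomial van der Waerden theorem, accessible by PET-induction or the polynomial Hales--Jewett theorem) to ``some $n$ makes the intersection piecewise syndetic'' cannot be achieved by a counting or density argument, since positive upper Banach density does not imply piecewise syndeticity; it genuinely requires the idempotent machinery, namely the polynomial Central Sets Theorem, which simultaneously supplies the common value of $n$ and certifies centrality of the intersection. By comparison, the remaining ingredients---the rational-to-integer reduction, the bridge from piecewise syndetic to central via $pe=p$, and the shift-invariance of piecewise syndeticity---are routine.
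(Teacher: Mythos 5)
Your proposal is correct, but there is nothing in the paper to compare it against: the authors do not prove Theorem \ref{ap1} at all, they import it by citation from \cite[Theorem 3.6]{polynomial}, whose native form is the minimal-idempotent statement (if $e$ is a minimal idempotent of $(\bN,+)$ and $A\in e$, then $\{n:A\cap\bigcap_{P\in F}(-P(n)+A)\in e\}\in e$) --- exactly the form the authors later invoke directly in their proof of Theorem \ref{pvdw}. Judged on its own merits, your derivation of the piecewise syndetic version from that central-set input is the standard bridge and all three of your steps check out. (i) The rational-to-integer reduction is genuinely needed and handled correctly: with $b$ the least common multiple of the denominators, $P(bm)\in\Z$ because each denominator divides $b$, hence divides $b^i$ for every $i\ge 1$, and the zero constant term means only such $i$ occur; the paper is silent on this point even though its $\Po$ consists of rational polynomials, so your attention to it is a plus. (ii) The passage from piecewise syndetic to a central translate is sound: $p\in K(\bN,+)$ with $A\in p$ lies in some minimal left ideal $L$, an idempotent $e\in L$ satisfies $q+e=q$ for all $q\in L$ (since $\bN+e=L$ by minimality), so $A\in p+e$ yields $\{x:-x+A\in e\}\in p$, hence some $x$ with $-x+A$ central; this is in effect the standard fact, recorded in \cite{hs}, that every piecewise syndetic set has a translate that is central. (iii) The identity $(-x+A)\cap\bigcap_{P}\bigl(-Q_P(m)+(-x+A)\bigr)=-x+\bigl(A\cap\bigcap_{P}(-Q_P(m)+A)\bigr)$ is a one-line verification, and piecewise syndeticity is preserved by translation and by passing to supersets, so translating back by $x$ is legitimate. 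The only blemishes are cosmetic: you drift from additive to multiplicative notation ($pe$ versus $p+e$) mid-argument, and your final paragraph's claim that the strengthening ``genuinely requires'' the idempotent machinery is a plausible heuristic rather than a proved obstruction; but your identification of the polynomial central sets theorem of Bergelson--Leibman type \cite{poly1} as the irreducible input is exactly right and consistent with how the cited source and this paper use the result.
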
  

With these preliminaries in place, we now proceed to the proof of Theorem \ref{essential}.  

\begin{proof}[Proof of Theorem \ref{essential}]  

We assume that the constant zero function $ \vec{0} : \N \to \{0\} $ belongs to $ F $. Consider an arbitrary finite coloring of $\N$ 
given by  
\[
\N = \bigcup_{i=1}^{r} C_i \cup \bigcup_{j=1}^{q} E_j,
\]
where each $ C_i $ is a piecewise syndetic set, while none of the $ E_j $ are piecewise syndetic.  

We inductively construct five sequences:  
\begin{itemize}  
    \item $ (t_n)_{n\geq 0} $ in $ \{1, \dots, r\} $,  
    \item $ (y_n)_{n\geq 1} $ in $ \N $, strictly increasing,  
    \item $ (B_n)_{n\geq 0} $, a sequence of piecewise syndetic subsets of $ \N $, satisfying $ B_n \subseteq C_{t_n} $,  
    \item $ (D_n)_{n\geq 1} $, a sequence of piecewise syndetic subsets of $ \N $, satisfying $ D_n \subseteq B_{n-1} $.  
\end{itemize}  

We begin by setting $ F_1 = F $, choosing $ t_0 = 1 $, and setting $ B_0 = C_1 = C_{t_0} $. By Theorem \ref{ap1}, we find 
$ y_1 \in \N $ such that  
\[
D_1 = B_0 \cap \bigcap_{P \in F_1} (B_0 - P(y_1))
\]
is a piecewise syndetic set. By \cite[Lemma 1.5]{update1}, the set $ y_1 D_1 $ is also piecewise syndetic. Choosing an ultrafilter 
$ p \in K(\beta \N, +) $ such that $ y_1 D_1 \in p $, we observe that $ \bigcup_{j=1}^{q} E_j \notin p $, and hence 
$ \bigcup_{i=1}^{r} C_i \in p $. Consequently, there exists $ t_1 \in \{1, \dots, r\} $ such that  
\[
B_1 = y_1 D_1 \cap C_{t_1}
\]
is piecewise syndetic.  

Assume that for some $ n \geq 1 $, we have constructed $ (t_m)_{m=0}^{n-1} $, $ (y_m)_{m=1}^{n-1} $, $ (B_m)_{m=0}^{n-1} $, and 
$ (D_m)_{m=1}^{n-1} $.  

For each $ z \in \N $ and $ P \in F $, define  
\[
P_z(n) = z P(n \cdot z) \quad \text{for all } n \in \N.
\]
Let  
\[
Y_n = \left\{ \prod_{t \in H} y_t : H \in \mathcal{P}_f(\{1, \dots, n-1\}) \right\}
\]
and define  
\[
F_n = \{ P_z \mid z \in Y_n \} \cup F.
\]
Choosing $ y_n \in \N $ such that  
\[
D_n = B_{n-1} \cap \bigcap_{P \in F_n} (B_{n-1} - P(y_n))
\]
is piecewise syndetic, we then pick $ t_n \in \{1, \dots, r\} $ such that  
\[
B_n = y_n D_n \cap C_{t_n}
\]
is piecewise syndetic.  

Observing that for each $ i \geq 1 $, we have $ B_i \subseteq y_i D_i \subseteq y_i B_{i-1} $, repeated iteration yields  
\[
\forall 0 \leq j < i, \quad B_i \subseteq y_{j+1} \cdots y_i B_j.
\]
Since the sequence $ (t_n) $ takes finitely many values, there exist indices $ j < i $ such that $ t_i = t_j $. Let $ \tilde{x} 
\in B_i \subseteq C_{t_i} $ and define $ y = y_{j+1} \cdots y_i $ and $ x = \tilde{x} / y $. Then,  
\begin{itemize}  
    \item $ xy = \tilde{x} \in B_i \subseteq C_{t_i} $,  
    \item $ \tilde{x} = xy \in B_i \subseteq y B_j $ implies $ x \in B_j \subseteq C_{t_j} = C_{t_i} $.  
\end{itemize}  

Finally,  
\[
y \cdot (x + P(y)) = \tilde{x} + y P(y) \in B_i + y P(y) \subseteq y_i D_i + y P(y),
\]
which further implies  
\[
x + P(y) \in B_j \subseteq C_{t_j} = C_{t_i},
\]
which completes the proof.  
\end{proof}

With Theorem \ref{ap1} established, we now proceed to the proof of Theorem \ref{task}. Our approach relies on leveraging 
Theorem \ref{ap1} in combination with several combinatorial arguments. The key idea is to utilize the structure of piecewise 
syndetic sets to extract a suitable configuration that satisfies the conditions of Theorem \ref{task}.

\begin{proof}[Proof of Theorem \ref{task}]
   Let $r,n\in \N$, and suppose that $\N$ is partitioned into $r$ colors by a coloring function $\omega$. Our objective is to 
identify a set $B$ with $|B| = n$ and an infinite set $A$ such that the set $A \cup (A + B) \cup (A \cdot B)$ is monochromatic 
under $\omega$.  

By a compactness argument (see \cite[Section 5.5]{hs}), there exists an integer $R \in \N$ such that, for any $r$-coloring of 
$[1, R]$, one can always find a monochromatic set of the form $\{a, ad, \dots, ad^n\}$. Let $\omega$ be the given $r$-coloring 
of $\N$. Define a new coloring $\omega'$ on $\N$ by  

\[
\omega' : \N \to \bigtimes_{i=1}^{R} \{1, \dots, r\}, \quad \text{where for } \alpha \in \N \text{ and } i \in [1,R], 
\quad \omega'(\alpha)_i = \omega(i\alpha).
\]

This construction ensures that  

\[
\omega'(\alpha) = \omega'(\beta) \quad \text{if and only if} \quad \omega(i\alpha) = \omega(i\beta) \text{ for all } i \in [1, R].
\]  

Next, define a finite set $F \subseteq \Q[x]$ by  

\[
F = \left\{ P(z) = \frac{m}{n} z \,:\, m, n \in [1, R] \right\}.
\]  

By Theorem \ref{essential}, there exist an integer $y \in \N$ and an infinite set $C \subseteq \N$ such that  

\[
\{x, x \cdot y, x + P(y) \mid P \in F, x \in C \}
\]  

is monochromatic under $\omega'$. Define the set  

\[
\mathcal{P} = \{ x, x \cdot y, x + P(y) \mid P \in F, x \in C \}.
\]  

Since $\omega'(x) = \omega'(x \cdot y) = \omega'(x + P(y))$ for all $P \in F$, this induces an $r$-coloring $\chi$ of $[1, R]$ given by  

\[
\chi(m) = \omega(m \cdot \mathcal{P}).
\]  

By the choice of $R$, there exist $a, d \in [1, R]$ such that  

\[
\omega\left(\{a, ad, \dots, ad^n\} \cdot \mathcal{P} \right) \text{ is constant}.
\]  

Define  

\[
D = \{ a, ad, \dots, ad^n \} \cdot \mathcal{P}.
\]  

Now, set  

\begin{itemize}
    \item $A = a \cdot C \subseteq D$,
    \item $B = \{ dy, d^2y, \dots, d^n y \}$.
\end{itemize}  

We claim that this choice of $A$ and $B$ satisfies the required conditions. Indeed, for any $ax \in A$ and $d^i y \in B$ with 
$1 \leq i \leq n$, we have  

\[
ax + d^i y = a \left( x + \frac{d^i}{a} y \right) \in D,
\]  

since $ad^i \leq R$ implies $a, d^i \leq R$. Moreover,  

\[
(ax) \cdot (d^i y) = (ad^i) \cdot (xy) \in D.
\]  

Thus, $A \cup (A + B) \cup (A \cdot B)$ is monochromatic, completing the proof. 

\end{proof}

\section{Proof of Theorem \ref{xyz}}

To establish a nonlinear extension of monochromatic Pythagorean triples, we introduce the concept of homogeneous sets, which 
provide a structured framework for analyzing partition regularity in a more constrained setting. The power of homogeneous sets 
lies in their ability to preserve combinatorial structure under finite colorings, making them a natural tool for extending 
classical results to nonlinear equations. Golowich \cite{Gol} first employed this notion to settle Rado's conjecture, demonstrating its 
effectiveness in tackling deep problems in Ramsey theory. We now formally define homogeneous sets and explore their role in our proof.

\begin{defn}  
A family \( S \) of subsets of \( \Z^+ \) is called \textit{homogeneous} if, for every set \( A \in S \) and every \( k \in \Z^+ \), 
the set  
\[ A_k = \{ka : a \in A\} \]  
also belongs to \( S \).  

Furthermore, for any \( r \in \N \), the family \( S \) is said to be \textit{\( r \)-regular} if, for every \( r \)-coloring 
of \( \N \), there exists a monochromatic set in \( S \).  
\end{defn}

For example, for any $r\in \N,$ the family of sets $\left\lbrace \{a,\frac{a}{2^i}\}:a\in \N,1\leq i\leq r 
\right\rbrace$ is a homogeneous family, which is $r$-regular. Again from  \cite{comp}, $\left\lbrace x,y,z\in 
\N:x^2+y^2=z^2\right\rbrace$ is a  homogeneous $2$-regular family.

\subsection{Proof of Theorem \ref{xyz}(1)}

To establish Theorem \ref{xyz}(1), we require a homogeneous extension of the polynomial van der Waerden theorem. This strengthened 
version ensures the existence of monochromatic polynomial patterns translated by homogeneous sets, allowing us to extend classical 
results to a broader nonlinear framework. The following theorem formalizes this idea.

\begin{thm}\label{pvdw}
    Let $\N$ be finitely colored and $F\in \p_f(\Po).$ Then there exists $a,d\in \N$ such that the pattern 
$$\{d\}\cup \{a+P(d):P\in F\}$$ is monochromatic. In addition, for any $m\in \N,$ we can choose $d$ such that $m|d.$
\end{thm}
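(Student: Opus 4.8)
The plan is to deduce Theorem~\ref{pvdw} from the polynomial van der Waerden theorem (Theorem~\ref{ap1}) by upgrading it to a \emph{homogeneous}, Brauer-type statement, in which the common difference itself is forced into the monochromatic colour class.

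First I would make two routine reductions. Adjoining the zero polynomial to $F$ only strengthens the conclusion (it forces $a = a+\vec 0(d)$ into the pattern), so I may assume $\vec 0 \in F$. Next, let $L$ be the least common multiple of $m$ together with all denominators of the coefficients of the finitely many $P \in F$; then requiring $L \mid d$ simultaneously guarantees $m \mid d$ and $P(d) \in \Z$ for every $P \in F$. Thus it suffices to produce a single colour class $C$ of the given $r$-colouring, an integer $d \in C$ with $L \mid d$, and an $a \in \N$ with $a + P(d) \in C$ for all $P \in F$; equivalently, to find $d \in C \cap L\N$ for which $C \cap \bigcap_{P \in F}\bigl(-P(d)+C\bigr) \neq \varnothing$. (Positivity of $a+P(d)$ is automatic: its membership in $C \subseteq \N$ forces it to be a positive integer, and $a$ may be taken arbitrarily large.)

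The device for forcing $d$ into $C$ is a minimal idempotent. Fix a minimal idempotent $p \in K(\beta\N,+)$; since $p$ is an ultrafilter, exactly one colour class $C$ satisfies $C \in p$, and this $C$ is central. Two facts then drive the argument. First, $L\N \in p$: every member of an idempotent ultrafilter contains an $\FS$-set of some sequence, and a pigeonhole on partial sums modulo $L$ shows every $\FS$-set contains a multiple of $L$; hence $L\N$ belongs to $p$, for otherwise its complement, being a member of $p$, would contain an $\FS$-set disjoint from $L\N$, contradicting that $L\N$ meets every $\FS$-set. Second, I would show that the set of admissible differences
\[
E = \Bigl\{ d \in \N : C \cap \bigcap_{P \in F}\bigl(-P(d)+C\bigr) \in p \Bigr\}
\]
also belongs to $p$. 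Granting this, $C \cap L\N \cap E \in p$ is nonempty, and any $d$ in it yields the required pattern: $d \in C$ with $L \mid d$, while $C \cap \bigcap_{P}(-P(d)+C) \in p$ is nonempty and provides a large base point $a$ with $a+P(d) \in C$ for every $P \in F$.

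The crux, and the step I expect to be the main obstacle, is exactly the assertion $E \in p$, that is, that the common difference can be captured inside the central colour class. This is the polynomial analogue of Brauer's theorem, and it is genuinely stronger than Theorem~\ref{ap1}, which furnishes only \emph{some} admissible difference in a piecewise syndetic set. A naive attempt to iterate Theorem~\ref{ap1} and pigeonhole on the finitely many colours of the successive differences breaks down here: such an iteration nests the progressions and forces one to add several common differences, and for nonlinear $P$ one has $P(d_1+\cdots+d_k) \neq \sum_i P(d_i)$, so the telescoping underlying the linear Brauer argument fails. Instead I would obtain $E \in p$ by running the proof of the polynomial van der Waerden theorem \emph{inside} the idempotent $p$, i.e.\ by establishing the central (idempotent) refinement of Theorem~\ref{ap1} in which the base points are kept in successively shrinking members of $p$ while the difference is produced, so that the conclusion holds $p$-largely rather than merely nonvacuously. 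This central refinement, combined with $L\N \in p$, completes the argument and simultaneously delivers the divisibility $m \mid d$.
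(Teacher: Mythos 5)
Your proposal is correct and is essentially the paper's own proof: fix a minimal idempotent $p\in K(\beta\N,+)$, take the colour class $C\in p$ (a central set), note $m\N\in p$ (your $L\N$), and intersect with the set of good differences $E=\left\{d\in\N : C\cap\bigcap_{P\in F}\left(-P(d)+C\right)\in p\right\}$, exactly as the paper does. The step you single out as the crux, $E\in p$, is precisely what the paper obtains by citing \cite[Theorem 3.6]{polynomial} — that is, the ``central/idempotent refinement'' of Theorem \ref{ap1} that you propose to establish is already the cited form of the polynomial van der Waerden theorem — while your explicit handling of the rational denominators via $L$ is a detail the paper glosses over.
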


Before proving Theorem \ref{pvdw}, we recall the notion of minimal idempotent ultrafilters in \((\bN, +)\). A minimal idempotent 
ultrafilter is an element of \( K(\bN, +) \) that satisfies the idempotency condition \( p = p + p \). The existence of such 
ultrafilters follows from Zorn’s Lemma, and any set belonging to a minimal idempotent ultrafilter is called a central set. 
Notably, if \( p \in K(\bN, +) \) is an idempotent ultrafilter, then for every \( n \in \N \), the set \( n\N \) belongs to \( p \).  

A key structural property of central sets is that they contain sequences with a rich combinatorial structure. Specifically, 
if \( A \) is a central set, there exists a sequence \( \langle x_n \rangle_n \) in \( \N \) such that its finite sums, given by  
\[
FS(\langle x_n \rangle_n) = \left\{ \sum_{t \in H} x_t : H \in \mathcal{P}_f(\N) \right\},
\]  
are entirely contained in \( A \). This property will play a crucial role in our proof of Theorem \ref{pvdw}.  

\begin{proof}[Proof of Theorem \ref{pvdw}]  
Let \( A \) be a central set. By definition, there exists a minimal idempotent ultrafilter \( p \in K(\bN, +) \) such 
that \( A \in p \). By \cite[Theorem 3.6]{polynomial}, the set  
\[
D = \left\{ n \in \N : A \cap \bigcap_{P \in F} (A - P(n)) \in p \right\}
\]  
belongs to \( p \). Since \( A \cap D \cap m\N \in p \), we can select a sequence \( \langle x_n \rangle_n \) in \( \N \) such 
that \( FS(\langle x_n \rangle_n) \subseteq A \cap D \cap m\N \). Choosing \( d = mn \in A \cap D \cap m\N \), we find \( a \in A \) 
such that  
\[
\{d\} \cup \{a + P(d) : P \in F\} \subseteq A.
\]  
This completes the proof.  
\end{proof}

Now we will prove the homogeneous version of the polynomial van der Waerden theorem. 

\begin{thm}[\textbf{Homogeneous Polynomial van der Waerden Theorem}]\label{rvdw}
    Let $S$ be a homogeneous family of subsets of $\N$ which is $r$-regular, and let $F\in \p_f(\Po)$. Then for any 
$q\in \N,$ and any $r$-coloring of $\N,$ there exists $B\in S,$ $d>0$ such that  $$\{qd\}\cup \{b,b+P(d):P\in F\}$$ 
is monochromatic. 
    In addition, for any $m\in \N,$ we can choose $d$ such that $m|d.$
\end{thm}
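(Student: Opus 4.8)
The plan is to run the scaling/product-coloring argument used in the proof of Theorem \ref{task}, but feeding it the polynomial van der Waerden input of Theorem \ref{pvdw} (together with its divisibility clause) in place of Theorem \ref{essential}, and using the $r$-regularity of the homogeneous family $S$ in the role played there by geometric progressions. The homogeneity of $S$ will let me promote a single ``base point'' produced by Theorem \ref{pvdw} to a whole monochromatic member $B\in S$, while the use of \emph{rational} polynomials will absorb the scaling factor so that one common shift $P(d)$ serves every element of $B$ simultaneously.

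First I would invoke compactness: since $S$ is $r$-regular (and the members of $S$ relevant to us are finite), there is an $N\in\N$ such that every $r$-coloring of $[1,N]$ admits a monochromatic $B\in S$ with $B\subseteq[1,N]$. Next, enlarge the polynomial family to $\widehat F=\{\tfrac1i P : i\in[1,N],\ P\in F\}\cup\{\vec 0\}$; since each $P\in F\subseteq\Po$ has rational coefficients and zero constant term, so does $\tfrac1i P$, hence $\widehat F\in\p_f(\Po)$. Now define the product coloring $\chi'\colon\N\to[r]^{[1,N]}$ by $\chi'(x)_i=\chi(ix)$, so that $\chi'(u)=\chi'(v)$ exactly when $\chi(iu)=\chi(iv)$ for all $i\in[1,N]$. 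Applying Theorem \ref{pvdw} to $\chi'$, the family $\widehat F$, and the divisor $qm$ (to control both the modulus and the marker), I obtain $a\in\N$ and $d>0$ with $qm\mid d$ such that $\{d\}\cup\{a+Q(d):Q\in\widehat F\}$ is $\chi'$-monochromatic. Unravelling the product coloring, for each fixed $i\in[1,N]$ the set $\{id\}\cup\{ia+iQ(d):Q\in\widehat F\}$ is $\chi$-monochromatic; call its colour $c_i$. Finally, colour $[1,N]$ by $\eta(i)=c_i$ and use the choice of $N$ to extract a monochromatic $B\in S$, $B\subseteq[1,N]$, of some colour $c$.

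The decisive step is the verification, where the rational family pays off exactly as in Theorem \ref{task}. Put $B^{*}=aB=\{ia:i\in B\}$, which lies in $S$ by homogeneity, and take the shift parameter to be $d$. For $i\in B$ we have $\chi(ia)=c_i=c$, so $B^{*}$ is monochromatic; and for any $b^{*}=ia\in B^{*}$ and any $P\in F$, choosing $Q=\tfrac1i P\in\widehat F$ gives $ia+P(d)=i\bigl(a+Q(d)\bigr)$, whence $\chi(b^{*}+P(d))=c_i=c$. Thus the \emph{same} shift $P(d)$ works for every $b^{*}\in B^{*}$, so $B^{*}\cup\{b^{*}+P(d):b^{*}\in B^{*},\,P\in F\}$ is monochromatic of colour $c$, and the divisibility $m\mid d$ is inherited directly from Theorem \ref{pvdw}. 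I expect the main obstacle to be precisely this alignment of a single additive shift across the whole homogeneous set $B^{*}$: a naive product-coloring argument yields shifts that scale with $i$, and it is the absorption $iQ(d)=P(d)$ afforded by the rational polynomials that removes the $i$-dependence. A secondary, bookkeeping point is forcing the marker $qd$ into colour $c$; this I would handle by adjoining the index $q$ to the range of the product coloring, so that $\chi(qd)$ is governed by one extra coordinate, and then matching colours using the divisibility clause $q\mid d$ together with a pigeonhole over the finitely many colours.
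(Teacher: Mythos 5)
Your overall architecture is the same as the paper's (compactness for $S$, the product coloring $\chi'(x)_i=\chi(ix)$, Theorem \ref{pvdw} with its divisibility clause, extraction of a monochromatic member of $S$ from the induced coloring of $[1,N]$, promotion by homogeneity to $B^*=aB$, and the absorption $iQ(d)=P(d)$ via rational polynomials), and your verification that $B^*\cup\{b^*+P(d):b^*\in B^*,P\in F\}$ is monochromatic is correct. But the marker $qd$ is not a secondary bookkeeping point: it is exactly the step your argument does not close, and the fix you sketch fails. Adjoining the coordinate $q$ to the product coloring tells you that $qd$ has color $c_q$; the set $B\in S$ you extract from the coloring $\eta(i)=c_i$ has some color $c$; and the $r$-regularity of $S$ gives you no control over which color $c$ is --- it only guarantees that \emph{some} monochromatic member exists. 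Hence no pigeonhole over the $r$ colors can force $c_q=c$: nothing prevents $\eta$ from assigning every member of $S$ inside $[1,N]$ a color different from $c_q$. (Trying instead to color $i$ by the pair $(c_i,c_{qi})$ also fails, since that is an $r^2$-coloring and $S$ is only assumed $r$-regular.)

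The paper closes this gap by making the final shift parameter depend on the extracted set, which forces the marker into the correct fiber. Concretely, it enlarges the polynomial family in \emph{two} parameters, $F_1=\left\{\frac{1}{y}P_{\frac{z}{q}}:P\in F,\ y,z\in[1,R]\right\}$ where $P_r(x)=P(rx)$ (input rescaling as well as output division), applies Theorem \ref{pvdw} to get a $\chi'$-monochromatic pattern $\mathcal{Q}=\{d_1\}\cup\{a+P(d_1):P\in F_1\}$ with $q\mid d_1$, extracts $\{b_1,\dots,b_n\}\in S$ from the induced coloring of $[1,R]$, and then sets $d=\frac{b_1d_1}{q}$. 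Now $qd=b_1d_1$ lies in the fiber $b_1\cdot\mathcal{Q}$, so it automatically carries the common color, while for each $i$ the point $ab_i+P(d)=b_i\left(a+\frac{1}{b_i}P_{\frac{b_1}{q}}(d_1)\right)$ stays in the fiber $b_i\cdot\mathcal{Q}$ precisely because $\frac{1}{b_i}P_{\frac{b_1}{q}}\in F_1$. Your one-parameter family $\widehat F=\left\{\frac{1}{i}P\right\}$ cannot support this rescaling of the input by $\frac{b_1}{q}$, so your proof cannot be completed as written; replacing $\widehat F$ by $F_1$ and taking $d=\frac{i_0 d_1}{q}$ for some $i_0\in B$ (with $qm\mid d_1$, so that $m\mid d$) turns your outline into the paper's proof.
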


\begin{proof}
    By a compactness argument \cite[Section 5.5]{hs}, choose \( R \in \N \) such that for any \( r \)-coloring of \( [1,R] \), there 
exists a monochromatic set in \( S \). Let \( \omega \) be the given \( r \)-coloring of \( \N \).  

    We define a new \( r^R \)-coloring \( \omega' \) of \( \N \) as follows:  
    \[
    \omega' : \N \to \prod_{i=1}^{R} \{1, \dots, r\}, \quad \text{where for each } \alpha \in \N \text{ and } i \in \{1, \dots, R\}, 
\quad \omega'(\alpha)_i = \omega(i\alpha).
    \]  
    This construction ensures that  
    \[
    \omega'(\alpha) = \omega'(\beta) \text{ if and only if } \omega(i\alpha) = \omega(i\beta) \text{ for all } i \in [1,R].
    \]

    For each \( P \in \Po \) and \( r \in \Q \), define a new polynomial \( P_r \in \Po \) by  
    \[
    P_r(x) = P(rx).
    \]  
    Now, define the finite set of polynomials  
    \[
    F_1 = \left\{ \frac{1}{y} P_{\frac{z}{q}} : P \in F, \text{ and } y, z \in [1,R] \right\} \in \p_f(\Po).
    \]

    Applying Lemma \ref{pvdw} to the polynomial set \( F_1 \) and the coloring \( \omega' \), we obtain a monochromatic polynomial 
progression  
    \[
    \p = \{d_1\} \cup \{a + P(d_1) : P \in F_1\}.
    \]  

    By the definition of \( \omega' \), for each \( i \in [1,R] \), we have  
    \[
    \omega(x) = \omega(y) \quad \text{for all } x, y \in i \cdot \p = \{ i \cdot m : m \in \p \}.
    \]  

    Thus, defining the induced coloring \( \chi \) on \( [1,R] \) by  
    \[
    \chi(m) = \omega(m \cdot \p), \quad \text{for each } m \in [1,R],
    \]  
    we obtain a homogeneous set \( \{b_1, \dots, b_n\} \) with  
    \[
    \chi(b_1) = \chi(b_2) = \dots = \chi(b_n).
    \]  

    Now, we verify the desired properties:  
    \begin{enumerate}
        \item For every \( i,j \in [1,n] \), we have  
        \[
        \omega(b_i a) = \omega(b_j a).
        \]  
        \item Since \( d_1 \in \p \), we can choose \( d_1 \) such that \( q \mid d_1 \), ensuring that  
        \[
        b_1 d_1 = q \cdot \frac{b_1 d_1}{q} = qd \in b_1 \cdot \p, \quad \text{where } d = \frac{b_1 d_1}{q}.
        \]  
        \item For every \( i \in [1,n] \) and \( P \in F \), we have  
        \[
        \omega(ab_i + P(d)) = \omega\left( b_i \cdot \left( a + \frac{1}{b_i} P_{\frac{b_1}{q}}(d_1) \right) \right) = \chi(b_i).
        \]  
    \end{enumerate}

    Finally, defining \( B = \{ ab_i : 1 \leq i \leq n \} \), we conclude the proof.  
\end{proof}

Now we are in the position to prove Theorem \ref{xyz}(1). 

\begin{proof}[Proof of Theorem \ref{xyz}(1)]\label{wow1}

Let \( F \in \mathcal{P}_f(\Po) \) be a set that we specify. Our goal is to show that for every \( 2 \)-coloring of \( \mathbb{N} \), 
there exists \( (x,y,z) \in S \) such that the set  
\[
\{u\} \cup \{a + P(u) : a \in \{x, y, z\}, P \in F\}
\]  
is monochromatic.  

Define \( F = \{P_1, P_2, P_3\} \) where, for every \( z \in \mathbb{N} \), the polynomials are given by  
\[
P_1(z) = \frac{c z^n}{2}, \quad P_2(z) = \frac{c z^n}{4}, \quad P_3(z) = 0.
\]  
Then, the set  
\[
\left\{u, x + \frac{c u^n}{2}, x + \frac{c u^n}{4}, y, z \right\}
\]  
is monochromatic.  

Now, consider the equation:  
\begin{align*}
    \left( x + \frac{c u^n}{2} \right)^2 + y^2 &= z^2 + c u^n \left( x + \frac{c u^n}{4} \right) \\  
    &= z^2 + c u^n v.
\end{align*}  

Finally, define the variables as follows:  
\begin{itemize}
    \item \( X = x + \frac{c u^n}{2}, \quad Y = y, \quad Z = z; \)
    \item \( U = u, \quad V = x + \frac{c u^n}{4}. \)
\end{itemize}  
This completes the proof.
\end{proof}

\subsection{Proof of Theorem \ref{xyz}(2)}

To establish Theorem \ref{xyz}(1), we require a key technical lemma. This result extends Theorem 
\ref{essential} within the framework of homogeneous settings. The strengthened version guarantees the existence of the 
conclusion of Theorem \ref{essential}, 
but now applied to suitably translated homogeneous sets, thereby broadening its applicability.
\begin{lem}\label{anal}
     Let $S$ be a homogeneous family of subsets of $\N$ which is $r$-regular, and let $F\in \p_f(\Po)$. Then for any 
$r$-coloring of $\N,$ there exists $B\in S,$ $d>0$ such that $$\{b,b+P(d),b\cdot d:b\in B, P\in F\}$$ is partition regular.
\end{lem}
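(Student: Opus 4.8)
The plan is to mirror the structure of the proof of Theorem \ref{rvdw}, replacing the appeal to the polynomial van der Waerden theorem (Theorem \ref{pvdw}) with an appeal to Theorem \ref{essential}, which is precisely the ``multiplicative'' companion that produces the configuration $\{x, xy, x+f(y)\}$ rather than $\{d\}\cup\{a+P(d)\}$. First I would fix the homogeneous $r$-regular family $S$, the finite polynomial set $F$, and the given $r$-coloring $\omega$ of $\N$. By the compactness argument of \cite[Section 5.5]{hs}, I would choose $R\in\N$ so large that every $r$-coloring of $[1,R]$ admits a monochromatic member of $S$ (restricted to $[1,R]$). As before, I would pass to the refined coloring $\omega':\N\to\prod_{i=1}^{R}\{1,\dots,r\}$ defined by $\omega'(\alpha)_i=\omega(i\alpha)$, so that $\omega'(\alpha)=\omega'(\beta)$ exactly when $\omega(i\alpha)=\omega(i\beta)$ for all $i\in[1,R]$.

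Next I would enlarge the polynomial family to absorb the multiplicative scaling by the forthcoming homogeneous set. Following the recipe in Theorem \ref{rvdw}, I would set
\[
F_1=\left\{\tfrac{1}{y}P_{z}:P\in F,\ y,z\in[1,R]\right\}\in\p_f(\Po),
\]
where $P_z(x)=P(zx)$, so that after multiplying through by any $b\in[1,R]$ the polynomial translates in the $\omega$-coloring will line up. Applying Theorem \ref{essential} to $F_1$ and the coloring $\omega'$, I obtain a color class and an element $d_1\in\N$ together with an \emph{infinite} set $C$ such that for each $x\in C$ the configuration $\{x,\,x d_1,\,x+Q(d_1):Q\in F_1\}$ is monochromatic under $\omega'$. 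I would then package these into the single $\omega'$-monochromatic set $\p=\{x,xd_1,x+Q(d_1):x\in C,\ Q\in F_1\}$ and induce the coloring $\chi$ on $[1,R]$ by $\chi(m)=\omega(m\cdot\p)$ (well-defined because every element of $\p$ receives the same $\omega'$-color, hence $\omega(i\cdot p)$ is independent of the choice of $p\in\p$ for each $i$). By the choice of $R$, there is a monochromatic-for-$\chi$ set $B_0=\{b_1,\dots,b_k\}\in S$, and homogeneity lets me take $B=b_1\cdot\p$-type scalings; concretely I would set $d=d_1$ (or an appropriate scalar multiple) and $B=\{b_i\cdot x: \text{suitable } x\}$, checking that $b+P(d)$, $b$, and $b\cdot d$ all land in one $\omega$-color by unwinding $\omega(b_i a+P(d))=\omega(b_i(a+\frac{1}{b_i}P_{?}(d_1)))=\chi(b_i)$ exactly as in Theorem \ref{rvdw}.

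The point where care is required is the bookkeeping that guarantees $\{b,b+P(d),b\cdot d:b\in B,P\in F\}$ is genuinely \emph{partition regular} (a family property, producing a monochromatic member of $S$ of the translated-and-dilated form) rather than merely monochromatic for one fixed coloring. This means the output $B$ must itself be a member of a homogeneous family, so that the construction can be iterated and invoked against arbitrary colorings; I would verify that the set of configurations $\{B,B+P(d),B\cdot d\}$ forms a homogeneous family by tracking how multiplication by $k$ interacts with the scaling factors introduced in $F_1$ and with the dilation $b\mapsto b\cdot d$. The main obstacle, therefore, is not any single inequality but ensuring the multiplicative term $b\cdot d$ (supplied by the $xy$ component of Theorem \ref{essential}, which Theorem \ref{pvdw} does not provide) is correctly aligned under $\chi$; the refined coloring $\omega'$ and the enlarged family $F_1$ are designed precisely to make $\omega(b_i\cdot(x d_1))$ agree with $\chi(b_i)$, and confirming this alignment is the crux of the argument.
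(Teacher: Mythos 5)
Your proposal is correct and is essentially the paper's own proof: the paper likewise fixes $R$ by the compactness argument of Theorem \ref{rvdw}, enlarges the polynomial family to $F'=\left\lbrace \frac{1}{n}P : P\in F,\ n\in[1,R]\right\rbrace$, applies Theorem \ref{essential} to the refined coloring $\omega'$, and then repeats the unwinding of Theorem \ref{rvdw} with $d=d_1$ and $B=x\cdot B_0$ (the multiplicative term $b\cdot d = b_i\cdot(xd_1)$ being handled exactly as you say, since $xd_1$ lies in the monochromatic pattern). The only differences are cosmetic: your $F_1$ carries superfluous argument-scalings $P_z$, which Theorem \ref{rvdw} needs only because $d$ gets rescaled to $\frac{b_1d_1}{q}$ there, whereas here $d=d_1$ suffices; and your closing worry about the configurations forming a homogeneous family is unnecessary --- since the construction starts from an arbitrary $r$-coloring, monochromaticity of the produced set is all the lemma (and its use in Theorem \ref{xyz}(2)) requires.
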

\begin{proof}
Let $R\in \N$ be the natural number as in the proof of Theorem  \ref{rvdw}. Let us define a new set of polynomials 
$F'\in \mathbb{P}$ such that $F'=\left\lbrace \frac{1}{n}P:P\in F, n\in [1,R]  \right\rbrace.$ Now
 for any finite coloring of $\N$, from Theorem \ref{essential} there exists $x,y\in \N$  such 
that $\{x,x\cdot y, x+P'(y):P'\in F'\}$ is monochromatic.
    Now the rest of the proof is similar to the proof of Theorem \ref{rvdw}, so we omit it.
\end{proof}

\begin{proof}[Proof of Theorem \ref{xyz}(2)]
Proceeding analogously to the proof of Theorem \ref{wow1}, an application of Lemma \ref{anal} ensures the existence of 
elements \( (x, y, z) \in S \) and \( d \in \N \) such that the set  
\[
\left\lbrace a + i \cdot \frac{c}{2} d, \, a \cdot d \, : \, i = 0, \pm1, \, a \in \{x, y, z\} \right\rbrace
\]  
is monochromatic, where \( x^2 + y^2 = z^2 \).

Furthermore, we observe that  
\[
\left(z + \frac{c}{2} d \right)^2 - \left(y \pm \frac{c}{2} d \right)^2 = x^2 + c \cdot (zd \mp yd).
\]  
A suitable redefinition of variables now completes the proof.

\end{proof}

\section{Proof of Theorem \ref{newone}}

Before proceeding with the proof of Theorem \ref{newone}, we recall some fundamental concepts from \( p \)-adic valuation theory, 
which was employed by Fox and Radoi\'{c}i\v{c}. These concepts play a crucial role in constructing counterexamples that demonstrate the 
non-regularity of certain equations.

Let \( p \) be a prime and \( r \in \N \). The \( p \)-adic valuation of \( r \), denoted by \( \operatorname{ord}_p(r) \), is 
defined as the unique integer \( m \) such that  
\[
r = p^m \cdot s, \quad \text{where } p \nmid s.
\]  
By convention, we set \( \operatorname{ord}_p(0) = \infty \). The following key properties hold for all \( x, y \in \N \):

\begin{enumerate}
    \item \textbf{Multiplicative Property:}  
        \[
        \operatorname{ord}_p(xy) = \operatorname{ord}_p(x) + \operatorname{ord}_p(y).
        \]  
   
    \item \textbf{Additive Property:}  
        \[
        \operatorname{ord}_p(x+y) \geq \min \{\operatorname{ord}_p(x), \operatorname{ord}_p(y)\},
        \]  
        with equality if and only if \( \operatorname{ord}_p(x) \neq \operatorname{ord}_p(y) \).\footnote{This follows from basic 
modular arithmetic.}
\end{enumerate}

While the \( p \)-adic valuation is sufficient for our discussion over natural numbers, it can be extended to the field of rationals. 
For further details, we refer to \cite{foxkl, padic}.

Before proving Theorem \ref{newone}, we present an application of Theorem \ref{rvdw}, which establishes the existence of certain 
quadratic equations whose degree of regularity lies between \( n-1 \) and \( n \) when \( n \) is even.

\begin{cor}\label{cor1}
    For every $a, n, c \in \Z^+$, the equation  
    \[
    \sum_{i=1}^{n-1}a^{2i}X_i^2 = X_n^2 + c \cdot X_{n+1}X_{n+2}
    \]
    is $(n-1)$-regular.

    In particular, setting $a = 2$ and $c = 2^{2n}$, if $n+1$ is odd, then the equation  
    \[
    L_n \equiv \sum_{i=1}^{n-1}2^{2i}X_i^2 - X_n^2 - 2^{2n} \cdot X_{n+1}X_{n+2} = 0
    \]
    is not $(n+1)$-regular. Consequently, for even $n \in \N$, we obtain  
    \[
    n-1 \leq dor(L_n) \leq n.
    \]
\end{cor}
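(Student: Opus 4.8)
The plan is to establish the two independent assertions — $(n-1)$-regularity of the general equation and the failure of $(n+1)$-regularity of $L_n$ — by different techniques, and then simply read off $dor(L_n)$. For the lower bound I would follow the template of the proof of Theorem \ref{xyz}(1). Take as homogeneous family the solution set $S=\{(X_1,\dots,X_n)\in\N^n:\sum_{i=1}^{n-1}a^{2i}X_i^2=X_n^2\}$ of the associated constant-free ``core'' equation; it is closed under dilation, hence homogeneous. Granting that $S$ is $(n-1)$-regular, the Homogeneous Polynomial van der Waerden Theorem (Theorem \ref{rvdw}), applied to $S$ and a suitable $F\in\p_f(\Po)$, yields a monochromatic configuration comprising a core solution $(x_1,\dots,x_n)\in S$, its polynomial translates $x_j+P(d)$, and a prescribed multiple $qd$. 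I would then install the term $cX_{n+1}X_{n+2}$ by completing the square in a single left-hand variable.

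Concretely, keeping $X_i=x_i$ for $i<n-1$ and $X_n=x_n$ but replacing $x_{n-1}$ by $x_{n-1}+h$ produces, via $a^{2(n-1)}\bigl((x_{n-1}+h)^2-x_{n-1}^2\bigr)=a^{2(n-1)}h(2x_{n-1}+h)$, a solution of the full equation provided $cX_{n+1}X_{n+2}=a^{2(n-1)}h(2x_{n-1}+h)$. Writing $h=2g$ so that $2x_{n-1}+h=2(x_{n-1}+g)$ turns $x_{n-1}+g$ into a single translate, exactly as the half-translates $cu^n/2,cu^n/4$ are used in Theorem \ref{xyz}(1); one then reads off $X_{n+2}=x_{n-1}+g$ and takes $X_{n+1}$ to be the accompanying multiple of $d$, all of one colour. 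The principal obstacle in this half is to exhibit a genuinely $(n-1)$-regular homogeneous family of core solutions that is not itself more regular — any such over-regularity would force full regularity of $L_n$, contradicting the next step — generalizing the elementary $r$-regular family $\{a,a/2^i\}$ to the $n-1$ scales $a^2,\dots,a^{2(n-1)}$, and verifying its regularity by pigeonhole is the delicate point.

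For the upper bound $dor(L_n)\le n$ I would produce an explicit $(n+1)$-colouring avoiding monochromatic solutions of $L_n$, by the $p$-adic valuation method recalled above with $p=2$: colour $x$ by $\operatorname{ord}_2(x)\bmod(n+1)$. If $X_1,\dots,X_{n+2}$ were monochromatic, every valuation would be $\equiv t\pmod{n+1}$, so modulo $2(n+1)$ the $2$-adic valuations of the $n+1$ summands of $L_n$ are $2i+2t$ for $i=1,\dots,n-1$ (from $2^{2i}X_i^2$), $2t$ (from $X_n^2$), and $2n+2t+\varepsilon(n+1)$ with $\varepsilon\in\{0,1\}$ (from $2^{2n}X_{n+1}X_{n+2}$, since $\operatorname{ord}_2(X_{n+1})+\operatorname{ord}_2(X_{n+2})$ is pinned down only modulo $n+1$). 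Subtracting $2t$, the first $n$ residues are the distinct even numbers $0,2,\dots,2n-2$, while the product term contributes $2n$ (if $\varepsilon=0$) or $n-1$ (if $\varepsilon=1$). This is exactly where the hypothesis that $n$ is even enters: then $2n$ is an even residue exceeding $2n-2$ and $n-1$ is odd, so in either case the product term's valuation differs from all of $0,2,\dots,2n-2$; for odd $n$ the residue $n-1$ could coincide with one of them and the argument collapses.

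Thus, for even $n$, all $n+1$ summands of $L_n$ have pairwise distinct $2$-adic valuations; the signed sum is therefore controlled by its unique term of least valuation and cannot vanish, so no monochromatic solution exists and $L_n$ is not $(n+1)$-regular. Together with the first half this yields $n-1\le dor(L_n)\le n$. I expect the construction and regularity of the core family to be the main technical hurdle for the lower bound, while the valuation bookkeeping above — precisely the verification that the $\varepsilon=1$ residue $n-1$ avoids the even residues if and only if $n$ is even — is where the parity hypothesis is genuinely spent.
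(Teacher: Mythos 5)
Your second half is fine: the coloring $\operatorname{ord}_2(\cdot)\bmod (n+1)$, the claim that the $n+1$ summands of $L_n$ then carry pairwise distinct $2$-adic valuations, and the ultrametric conclusion that a signed sum of terms with distinct valuations cannot vanish is exactly the paper's argument. You are in fact more careful than the paper on one point: you note that $\operatorname{ord}_2(X_{n+1})+\operatorname{ord}_2(X_{n+2})$ is pinned down only modulo $n+1$, producing the extra residue $n-1$ in your $\varepsilon=1$ case, and you verify that evenness of $n$ disposes of it; the paper instead works with congruences modulo $n+1$ (where the product term is $\equiv 2m-2$ and distinctness needs only that $2$ is invertible mod $n+1$), which comes to the same thing.

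The first half, however, has a genuine gap, and it is exactly the step you defer: you take as homogeneous family the solution set $S$ of the core equation $\sum_{i=1}^{n-1}a^{2i}X_i^2=X_n^2$ and \emph{grant} that it is $(n-1)$-regular. That is not a technicality recoverable by pigeonhole; for $n=3$, $a=1$ it already contains the partition regularity of Pythagorean triples (open beyond $2$ colors), and for general $a$ it is wide open. Pigeonhole on a dilation chain $M, M/a,\dots,M/a^{n-1}$ can only ever produce a monochromatic \emph{pair} $\{b,\, b/a^{j}\}$; it cannot produce an $n$-tuple solving a quadratic form, because solutions of the core equation do not in general lie in a single dilation chain (one would need $\sum_{i=1}^{n-1}a^{e_i}=1$ for even integers $e_i$, impossible e.g.\ whenever $n-1<a^{2}$). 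The paper's proof shows that no core solution is needed at all: its homogeneous family \emph{is} the pair family $\left\lbrace \{b, b/a^{j}\} : 1\le j\le n-1\right\rbrace$, which is $(n-1)$-regular by the trivial pigeonhole, and the equation is then solved by taking $X_i=d$ for all $i\neq j$ on the left, $X_n=b$, $X_j=\frac{b}{a^{j}}+\frac{c}{2a^{j}}d$, $X_{n+1}=d$, and $X_{n+2}=b+\frac{Q}{4}d$ with $Q=\frac{4}{c}\left(\sum_{i=1}^{n-1}a^{2i}-a^{2j}+\frac{c^{2}}{4}\right)$: completing the square at the single coordinate $X_j$ produces the cross term $cbd$, and the bilinear term $cX_{n+1}X_{n+2}$ absorbs both that cross term and all the leftover $a^{2i}d^{2}$'s. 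Every element used ($d$, $b$, $b/a^{j}$, and the two linear translates) comes from one application of Theorem \ref{rvdw} to the pair family with a finite set of linear polynomials. So your completing-the-square instinct is the right one, but it must be deployed to \emph{eliminate} the need for a core solution --- the product term soaks up the whole left-hand side evaluated at $d$ --- rather than to perturb a core solution that you have no way to produce.
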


\begin{proof}
    Let $M \in \N$ be such that $a^{n-1} \mid M$. Consider an $(n-1)$-coloring of the set  
    \[
    \left\lbrace M, \frac{M}{a}, \frac{M}{a^2}, \dots, \frac{M}{a^{n-1}} \right\rbrace.
    \]
    By the pigeonhole principle, there exist indices $0 \leq i < j \leq n-1$ such that  
    \[
    \frac{M}{a^i}, \frac{M}{a^j}
    \]
    are monochromatic. Consequently, for every $(n-1)$-coloring of $\N$, there exists a monochromatic element of the form 
$b, \frac{b}{a^i}$ for some $1 \leq i \leq n-1$. This ensures the existence of a homogeneous collection of such patterns.

    Next, let $F \in \p_f(\Po)$ be a finite set of polynomials defined as follows:
    \begin{enumerate}
        \item Define  
        \[
        M = \left\lbrace \frac{4}{c} \left( \sum_{i=1}^{n} a^{2i} - a^{2j} + \frac{c^2}{4} \right): j \in \{1, 2, \dots, n-1\} 
\right\rbrace.
        \]
        \item Let  
        \[
        F = \left\lbrace P(x) = \frac{Q}{4}x: Q \in M \right\rbrace.
        \]
    \end{enumerate}
    By applying Theorem \ref{rvdw}, for every $r$-coloring of $\N$, there exist $b \in \N$ and $j \in [1, n-1]$ such that  
    \[
    \{d\} \cup \{b + P(d) : P \in F\} \cup \left\lbrace \frac{b}{a^j} + P(d) : P \in F \right\rbrace
    \]
    is monochromatic. Now, we choose:
    \begin{itemize}
        \item For every $i \neq j \in [1, n-1]$, set $X_i = d$.
        \item Set  
        \[
        X_j = \frac{b}{a^j} + \frac{c}{2a^j}d.
        \]
        \item Define  
        \[
        X_n = b, \quad X_{n+1} = d, \quad X_{n+2} = b + \frac{Q}{4}d,
        \]
        where  
        \[
        Q = \frac{4}{c} \left( \sum_{i=1}^{n} a^{2i} - a^{2j} + \frac{c^2}{4} \right).
        \]
    \end{itemize}
    This provides a monochromatic solution to the equation  
    \[
    \sum_{i=1}^{n-1}a^{2i}X_i^2 = X_n^2 + c \cdot X_{n+1}X_{n+2}.
    \]

    To establish the second claim, define a coloring function $\chi: \N \to \Z_{n+1}$ by setting  
    \[
    \chi(r) = \operatorname{ord}_2(r) \mod (n+1) \quad \text{for all } r \in \N.
    \]
    Suppose, for contradiction, that this coloring yields a monochromatic solution, i.e., there exists $m \in \Z_{n+1}$ such that  
    \[
    \chi(X_i) = m \quad \text{for all } i \in [1, n+2].
    \]
    Then, we have:
    \begin{itemize}
        \item For every $1 \leq i \leq n-1$,  
        \[
        \operatorname{ord}_2(2^{2i} X_i^2) = 2i + 2m.
        \]
        \item For the square term $X_n^2$,  
        \[
        \operatorname{ord}_2(-X_n^2) = 2m.
        \]
        \item For the product term,  
        \[
        \operatorname{ord}_2(-2^{2n} X_{n+1} X_{n+2}) = 2n + 2m.
        \]
    \end{itemize}
    Note that  
    \[
    2i + 2m = 2j + 2m \iff 2(i - j) \equiv 0 \mod (n+1).
    \]
    However, if $n+1$ is odd,\footnote{If $n+1$ were even, we would have $2(i - j) = c(n+1)$ for some $c \geq 2$, implying $i - j = \frac{c}{2}(n+1) \geq n+1$, which is a contradiction.} this congruence is impossible. Consequently, each term in $L_n$ has a distinct $2$-adic valuation, leading to a contradiction since the sum of distinct $2$-adic valuations cannot be zero. Thus, a monochromatic solution does not exist for this coloring, which implies  
    \[
    dor(L_n) < n+1.
    \]
\end{proof}

In the preceding corollary, we examined an example of a nonlinear equation for which the exact degree of regularity remains unknown.  Now we  prove Theorem \ref{newone}.

\begin{proof}[Proof of Theorem \ref{newone}]
    Following a similar approach as in the proof of Corollary \ref{cor1}, for any $(n-1)$-coloring of $\mathbb{N}$, there exists a 
monochromatic pair of the form $\left( b, \frac{b}{p^{mj}} \right)$ for some $0< j \leq n-1$. 

    Consider the finite set of polynomials $F \subseteq \mathcal{P}_f(\mathcal{P})$ defined as follows:
    \begin{enumerate}
        \item Define the set 
        \[
        M = \left\lbrace \sum_{i=1}^{n-1} p^{mi} - p^{mj} + 1 : j \in \{1,2,\ldots,n-1\} \right\rbrace.
        \]
        \item Let 
        \[
        F = \left\lbrace P(x) = Qx : Q \in M \right\rbrace.
        \]
    \end{enumerate}
    
    By Theorem \ref{rvdw}, for every $(n-1)$-coloring of $\mathbb{N}$, there exist elements $b \in \mathbb{N}$ and $j 
\in \{1,2, \dots, n-1\}$ such that the set 
    \[
    \left\lbrace d \right\rbrace \cup \left\lbrace b + P(d) : P \in F \right\rbrace \cup \left\lbrace 
\frac{b}{p^{mj}} + P(d) : P \in F \right\rbrace
    \]
    is monochromatic. Now, define the elements:
    
    \begin{itemize}
        \item For every $i \neq j \in [1,n-1]$, set $X_{i,1} = X_{i,2} = d$.
        \item Set $X_{j,1} = \frac{b}{p^{mj}} + \frac{1}{p^{mj}} d$.
        \item Set $X_{j,2} = d$.
        \item Define $X_n = d$ and $X_{n+1} = b + Qd$, where $Q \in \sum_{i=1}^{n-1} p^{mi} - p^{mj} + 1$.
    \end{itemize}
    
    This provides a monochromatic solution to the equation $M_n = 0$. 

    To establish that $M_n$ is not $n$-regular, consider the coloring $\chi : \mathbb{N} \to \mathbb{Z}_n$ defined by 
    \[
    \chi(r) = \operatorname{ord}_p(r) \mod n, \quad \text{for all } r \in \mathbb{N}.
    \]
    Suppose, for contradiction, that this coloring yields a monochromatic solution. Then, for every $i \in [1, n+2]$, 
we must have $\chi(X_i) = r \leq n-1$. However, observing the order of $p$, we obtain:
    
    \begin{itemize}
        \item For every $1 \leq i \leq n-1$, 
        \[
        \operatorname{ord}_p \left( p^{mi} X_{i,1} X_{i,2}^{m-1} \right) = mi + rm.
        \]
        \item Similarly, 
        \[
        \operatorname{ord}_p \left( X_n^{m-1} X_{n+1} \right) = rm.
        \]
    \end{itemize}

    Now, we analyze all possible contradictions:
    
    \begin{itemize}
        \item If $i \neq j \in [1,n-1]$, then $rm + mi \neq rm + mj$, since $m$ has no zero divisors.
        \item If $i \in [1,n-1]$, then by a similar argument, $rm + mi \neq rm$; otherwise, we would have $mi \equiv 0 \pmod{n}$, 
which is impossible.
    \end{itemize}

    Since all elements of $M_n$ have distinct $p$-adic valuations and their sum equals zero, we reach a contradiction. 

    This completes the proof.
\end{proof}

\section*{Acknowledgement} The second author of this paper is supported by NBHM postdoctoral fellowship with 
reference no: 0204/27/(27)/2023/R \& D-II/11927.

\end{document}